\newtheorem{theorem}{Theorem}[section]
\newtheorem{corollary}[theorem]{Corollary}
\newtheorem{proposition}[theorem]{Proposition}
\newtheorem{question}{Question}
\theoremstyle{definition}
\newtheorem{example}[theorem]{Example}
\newtheorem{definition}[theorem]{Definition}
\newcommand{\rmv}[1]{}
\definecolor{CUorange}{RGB}{246,103,51}
\definecolor{CUpurple}{RGB}{82,45,128}
\definecolor{CUred}{RGB}{162,80,22}
\definecolor{CUgreen}{RGB}{86,97,39}
\definecolor{CUblue}{RGB}{58,73,88}
\title{$\beta$-Packing Sets in Graphs}
\author[Case, Haithcock, Laskar]{Benjamin M. Case \and Evan M. Haithcock \and Renu C. Laskar}
\date{May 31, 2019}
\address{School of Mathematical and Statistical Sciences, Clemson University, SC, USA}
\thanks{Benjamin M. Case was partially supported by the  National Science Foundation  under grants DMS-1403062 and DMS-1547399.}
\begin{document}

	\begin{abstract}
	A set $S\subseteq V$ is $\alpha$\textit{-dominating} if for all $v\in V-S$, $|N(v) \cap S | \geq \alpha |N(v)|.$ The $\alpha$\textit{-domination number} of $G$ equals the minimum cardinality of an $\alpha$-dominating set $S$ in $G$.  Since being introduced by Dunbar, et al. in 2000, $\alpha$-domination has been studied for various graphs and a variety of bounds have been developed. In this paper, we propose a new parameter derived by flipping the inequality in the definition of $\alpha$-domination. We say a set $S \subset V$ is a \textit{$\beta$-packing set} of a graph $G$ if $S$ is a proper, maximal set having the property that for all vertices $v \in V-S$, $|N(v) \cap S| \leq \beta |N(v)|$
     for some $0 < \beta \leq 1.$ The \textit{$\beta$-packing number} of $G$ ($\beta$-pack($G$)) equals the maximum cardinality of a $\beta$-packing set in $G$.   In this research, we determine $\beta$-pack($G$) for several classes of graphs, and we explore some properties of $\beta$-packing sets.

  \medskip

  \noindent Keywords: $\beta$-packing, $\alpha$-domination,  graph theory, graph parameters
\end{abstract}
\maketitle
\section{Introduction}
	Let $G=(V,E)$ be a graph with vertex set $V=\{v_1,v_2,...,v_n\}$ and \emph{order} $n = |V|$. The {\em open neighborhood} of a vertex $v$ is the set $N(v) := \{u\: |\: uv \in E\} $ of vertices $u$ that are adjacent to $v$; the \emph{closed neighborhood} of $v$, $N[v]:=N(v)\cup \{v\}.$ 
	
	
	A set $S\subseteq V$ is $\alpha$\textit{-dominating} if for all $v\in V-S$, $|N(v) \cap S | \geq \alpha |N(v)|.$ The $\alpha$\textit{-domination number} of $G$ equals the minimum cardinality of an $\alpha$-dominating set $S$ in $G$.  Since being introduced by Dunbar, Hoffman, Laskar, and Markus \cite{Dunbar} in 2000, $\alpha$-domination has been studied for various graphs and a variety of bounds have been developed, see \cite{dahme2004some,Rubalcaba,gagarin2009upper,rad15,Das2018}. In this paper, we present a new parameter that is motivated by flipping the inequality in $\alpha$-domination, known as the $\beta$-packing set. 
	
	\begin{definition}
	For a graph $G = (V, E)$, a set $S \subset V$ is a \textit{$\beta$-packing set} of a graph $G$ if $S$ is a proper, maximal set having the property (which we call the \emph{$\beta$-packing property}) that for all vertices $v \in V-S$, \[|N(v) \cap S|\leq \beta |N(v)| \]
     for some $0 < \beta \leq 1.$ The \textit{$\beta$-packing number} of $G$, $\beta$-pack($G$), equals the maximum cardinality of a $\beta$-packing set in $G$.
	\end{definition}
	
	For example, we say that a set $S \subset V$ is a 1/2-beta packing set if $v \in V -S$, $\frac{|N(v) \cap S|}{|N(v)|} \leq$ 1/2 and is maximal. The 1/2-beta packing number equals the maximum cardinality of a 1/2-beta packing set in $G$.\medskip
	
	\begin{example}
	In Figure \ref{fig:clemsongraph} we show all of the 1/2-beta packing sets of the shown graph (up to symmetry). The $\beta$-packings sets are shown as the black filled vertices. Note that in each graph, no subset of $V-S$ can be added to $S$ while preserving both the $\beta$-packing property and keeping the $\beta$-packing set a proper subset. The largest cardinality of these sets is 2, so $\frac{1}{2}\beta$-pack(G) = 2.  
	\end{example}
	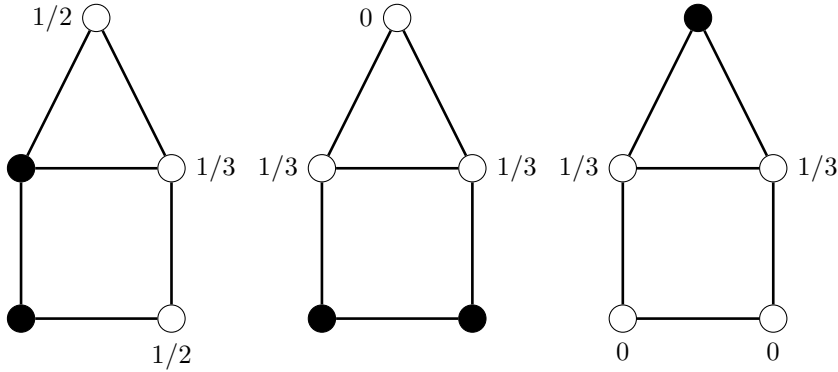
\begin{figure}[!tbh]
	    \centering
	    \begin{tikzpicture}
			\node [draw,circle, label=below:{}, fill] (A0) at (0,0) {};
			\node [draw,circle, label=below:{}, fill] (A1) at (2,0) {};
			\node [draw,circle, label=left:{\small1/3}] (A2) at (0,2) {};
			\node [draw,circle, label=right:{\small1/3}] (A3) at (2,2) {};
			\node [draw,circle, label=left:{\small0}] (A4) at (1,4) {};
			
			\draw [line width=1pt] (A0) edge (A2);
			\draw [line width=1pt] (A0) edge (A1);
			\draw [line width=1pt] (A1) edge (A3);
			\draw [line width=1pt] (A2) edge (A3);
			\draw [line width=1pt] (A4) edge (A2);
			\draw [line width=1pt] (A4) edge (A3);
			
			\node [draw,circle, label=below:{},fill ] (B0) at (-4,0) {};
			\node [draw,circle, label=below:{\small1/2}, ] (B1) at (-2,0) {};
			\node [draw,circle, label=left:{}, fill] (B2) at (-4,2) {};
			\node [draw,circle, label=right:{\small1/3}] (B3) at (-2,2) {};
			\node [draw,circle, label=left:{\small1/2}] (B4) at (-3,4) {};
			\draw [line width=1pt] (B0) edge (B2);
			\draw [line width=1pt] (B0) edge (B1);
			\draw [line width=1pt] (B1) edge (B3);
			\draw [line width=1pt] (B2) edge (B3);
			\draw [line width=1pt] (B4) edge (B2);
			\draw [line width=1pt] (B4) edge (B3);
			
			\node [draw,circle, label=below:{\small0}, ] (C0) at (4,0) {};
			\node [draw,circle, label=below:{\small0}, ] (C1) at (6,0) {};
			\node [draw,circle, label=left:{\small 1/3}, ] (C2) at (4,2) {};
			\node [draw,circle, label=right:{\small1/3}] (C3) at (6,2) {};
			\node [draw,circle, label=left:{},fill] (C4) at (5,4) {};
			\draw [line width=1pt] (C0) edge (C2);
			\draw [line width=1pt] (C0) edge (C1);
			\draw [line width=1pt] (C1) edge (C3);
			\draw [line width=1pt] (C2) edge (C3);
			\draw [line width=1pt] (C4) edge (C2);
			\draw [line width=1pt] (C4) edge (C3);
			\end{tikzpicture}
	    \caption{The 1/2-beta packing sets (up to symmetry), shown in black.  $\frac{1}{2}\beta$-pack(G) = 2.}
	    \label{fig:clemsongraph}
	\end{figure}

 
\section{Examples and $\beta$-Packing Sets for Classes of Graphs}
To begin we will consider some examples of different classes of graphs and try to determine some patterns about the $\beta$-packing number.  We start by looking at the 1/2-beta packing sets for paths and then generalize these results to all paths and cycles. A $\frac{1}{2}\beta$-packing for $P_6$ is show in Figure \ref{path}. 

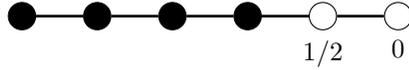
\begin{figure}[!tbh]
    \begin{center}
		\begin{tikzpicture}
				\node [draw,circle, label=below:{}, fill] (P1) at (0,0) {};
				\node [draw,circle, label=below:{},fill] (P2) at (1,0) {};
				\node [draw,circle, label=below:{}, fill] (P3) at (2,0) {};
				\node [draw,circle, label=below:{}, fill] (P4) at (3,0) {};
				\node [draw,circle, label=below:{\small1/2}] (P5) at (4,0) {};
				\node [draw,circle, label=below:{\small0}] (P6) at (5,0) {};
				\draw [line width=1pt] (P1) edge (P2);
				\draw [line width=1pt] (P2) edge (P3);
				\draw [line width=1pt] (P3) edge (P4);
				\draw [line width=1pt] (P4) edge (P5);
				\draw [line width=1pt] (P5) edge (P6);
			
		\end{tikzpicture}
    \end{center}
    \caption{The $\frac{1}{2}\beta$-packing set of a path, $P_6$.}
    \label{path}
\end{figure}
\begin{proposition} \label{paths-1/2}
	Given a path $P_n$ of length n $\geq$ 2,
	$\frac{1}{2}\beta\text{-pack}(P_n) = n - 2$ and $V-S$ is connected.
\end{proposition}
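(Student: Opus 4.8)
The plan is to label the path vertices $v_1, v_2, \dots, v_n$ with edges $v_iv_{i+1}$, and to begin by translating the $\tfrac{1}{2}$-packing property into local conditions on the two endpoints (degree $1$) and the internal vertices (degree $2$). Since $\tfrac12\cdot 1 < 1$, for an endpoint $v \in V-S$ the property forces $|N(v)\cap S| = 0$, i.e.\ its unique neighbor must lie in $V-S$ as well; since $\tfrac12\cdot 2 = 1$, for an internal vertex $v \in V-S$ the property permits at most one of its two neighbors to be in $S$. With these two reductions in hand, everything becomes a short combinatorial analysis of $V-S$.

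For the upper bound I would show that no proper set $S$ with the packing property can have $|S| = n-1$, i.e.\ $|V-S|=1$. If the single vertex $w$ of $V-S$ is internal, both its neighbors lie in $S$ and $|N(w)\cap S| = 2 > 1$, violating the property; if $w$ is an endpoint, its one neighbor lies in $S$ and $|N(w)\cap S| = 1 > \tfrac12$, again a violation. Hence every proper set with the property satisfies $|S|\le n-2$, so $\tfrac12\beta\text{-pack}(P_n)\le n-2$.

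For the lower bound I would exhibit the explicit set $S=\{v_1,\dots,v_{n-2}\}$, so that $V-S=\{v_{n-1},v_n\}$. A direct check shows $v_{n-1}$ (internal, for $n\ge 3$) has exactly one neighbor $v_{n-2}$ in $S$ and $v_n$ (an endpoint) has its neighbor $v_{n-1}\notin S$, so the property holds, and $S$ is clearly proper. Maximality is the step requiring care: adding $v_{n-1}$ leaves $v_n$ with its neighbor in $S$ (violation), adding $v_n$ gives $v_{n-1}$ two neighbors in $S$ (violation), and adding both makes $S=V$, no longer proper. Thus $S$ is a valid $\tfrac12\beta$-packing set of size $n-2$, giving $\tfrac12\beta\text{-pack}(P_n)\ge n-2$ and hence equality. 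The boundary case $n=2$ I would dispatch separately, noting that $S=\emptyset$ is the unique maximal proper packing set there.

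Finally, for connectedness of $V-S$ at a maximum set, I would use $|V-S|=2$, write $V-S=\{v_i,v_j\}$ with $i<j$, and argue the two vertices must be adjacent. If $v_1\in V-S$ the endpoint condition forces $v_2\in V-S$, so $\{v_i,v_j\}=\{v_1,v_2\}$; symmetrically if $v_n\in V-S$. Otherwise both $v_i,v_j$ are internal, and were they non-adjacent ($j\ge i+2$) then both neighbors of $v_i$ would lie in $S$, contradicting the property. In every case $v_i,v_j$ are consecutive, so $V-S$ is connected. I expect the only genuinely delicate point to be the maximality verification in the construction; the remaining steps are routine local casework once the two reductions for degree-$1$ and degree-$2$ vertices are established.
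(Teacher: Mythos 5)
Your proposal is correct and takes essentially the same route as the paper's proof: the same degree-$1$/degree-$2$ casework showing a consecutive-pair complement $\{v_i,v_{i+1}\}$ satisfies the $\tfrac12$-packing property, and the same singleton-complement violation that yields both maximality and the upper bound $|S|\le n-2$. Your version is in fact slightly more explicit than the paper's---you state the upper bound separately, dispatch $n=2$ on its own, and prove connectedness of $V-S$ by direct casework on paths rather than invoking the paper's general Proposition~\ref{Prop:connected}---but these are refinements of the same argument, not a different approach.
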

\begin{proof}
Consider a path of length $n$, $P_n = (V, E).$ If $V-S$ is not connected, $S$ is not maximal, see Proposition \ref{Prop:connected} where we show this in general. Suppose $S \subset V$ and $\{v_i, v_{i+1}\} = V - S$ for some $i\in[1, n-1]$. As $S$ is proper, it suffices to show that the $\beta$-packing property is fulfilled and that S is maximal. To show the former, consider the following cases:
\begin{itemize}
    \item If $\deg(v_i) = 1$, then $N(v_i) \cap S = \emptyset$ and \[ \frac{|N(v_i) \cap S|}{\deg(v_i)} = 0 \leq \frac{1}{2} .\]
    \item If $\deg(v_i) = 2$, then $N(v_i) \cap S = \{v_{i-1}\}$ and\[ \frac{|N(v_i) \cap S|}{\deg(v_i)} = \frac{1}{2} \leq \frac{1}{2}.\]
    \item If $\deg(v_{i+1}) = 1$ then $N(v_{i+1}) \cap S = \emptyset$ and \[ \frac{|N(v_i) \cap S|}{\deg(v_i)} = 0 \leq \frac{1}{2}.\] 
    \item If $\deg(v_{i+1}) = 2$ then $N(v_{i+1}) \cap S = \{v_{i+2}\}$ and \[ \frac{|N(v_i) \cap S|}{\deg(v_i)} = \frac{1}{2} \leq \frac{1}{2}.\]
\end{itemize}
		Thus the $\beta$-packing property holds in all cases.
		Now, we need to show that $S$ is maximal. WLOG, suppose $V-S = \{v_i\}$. We will again consider cases:
		\begin{itemize}
		    \item If $\deg(v_i) = 1$ then $ N(v_i) \cap S = \{v_{i+1}\}$ and \[ \frac{|N(v_i) \cap S|}{\deg(v_i)} = 1 > \frac{1}{2}.\]
		    \item If $\deg(v_i) = 2$ then $ N(v_i) \cap S = \{v_{i-1}, v_{i+1}\}$ \[ \frac{|N(v_i) \cap S|}{\deg(v_i)} = 1 > \frac{1}{2}. \]
		\end{itemize}
\end{proof}
The following three results cover all the possible values of $\beta$ and show what the corresponding value of $\beta$-pack$(P_n)$ is. 
\begin{proposition}
        For $\frac{1}{2} \leq \beta < 1$ and $n\geq 2$, $\beta \text{-pack}(P_n) = n-2$ and $V-S$ is connected.
\end{proposition}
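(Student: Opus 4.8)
The plan is to run the same argument as in Proposition \ref{paths-1/2}, after first observing that widening $\beta$ from $\tfrac12$ to the whole interval $[\tfrac12,1)$ does not change any of the relevant integer constraints on a path. First I would record the two threshold computations. For an internal vertex $v$ (with $\deg(v)=2$) the packing bound is $\beta\,|N(v)| = 2\beta$, and since $\tfrac12 \le \beta < 1$ we have $1 \le 2\beta < 2$; hence the $\beta$-packing property permits $v$ to have at most one neighbor in $S$ and forbids it from having both. For an endpoint $v$ (with $\deg(v)=1$) the bound is $\beta\,|N(v)| = \beta < 1$, so the property forbids the single neighbor of $v$ from lying in $S$. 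These are exactly the constraints that governed the $\beta=\tfrac12$ case, so the rest of the proof transfers with essentially no change.

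Next I would handle connectivity and the upper bound. By Proposition \ref{Prop:connected}, for any $\beta$-packing set $S$ the complement $V-S$ is connected, which in a path forces $V-S$ to be a subpath. To bound $|S|$ from above I would rule out $|V-S| = 1$: if $V-S = \{v_i\}$, then every neighbor of $v_i$ lies in $S$, giving $|N(v_i)\cap S|/\deg(v_i) = 1 > \beta$ for both $\deg(v_i)=1$ and $\deg(v_i)=2$, violating the packing property. Since properness already excludes $V-S = \emptyset$, every $\beta$-packing set satisfies $|V-S| \ge 2$, i.e.\ $|S| \le n-2$.

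Finally I would exhibit a set attaining the bound. Take $V-S = \{v_i, v_{i+1}\}$ for a pair of adjacent vertices. Each of $v_i, v_{i+1}$ has its path-neighbor on one side inside $V-S$, so it has at most one neighbor in $S$; by the threshold computation this satisfies the packing property (and an endpoint among them has zero neighbors in $S$). For maximality, absorbing either $v_i$ or $v_{i+1}$ into $S$ would leave a single-vertex complement, which we just saw violates the property; hence $S$ cannot be enlarged, so $S$ is a proper, maximal $\beta$-packing set with $|S| = n-2$. Together with the upper bound this yields $\beta\text{-pack}(P_n) = n-2$, with $V-S = \{v_i, v_{i+1}\}$ connected.

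I do not expect a genuine obstacle here: the entire content is the observation in the first paragraph that $\lfloor 2\beta\rfloor = 1$ and $\lfloor \beta\rfloor = 0$ throughout $[\tfrac12,1)$, so the combinatorics is frozen and the $\beta=\tfrac12$ proof applies verbatim. The only points requiring care are the degree-$1$ endpoint cases and stating maximality precisely as ``no vertex of $V-S$ can be moved into $S$ without violating the packing property.''
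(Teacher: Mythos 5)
Your proposal is correct and takes essentially the same route as the paper, whose proof is simply the remark that the $\beta=\tfrac{1}{2}$ argument (construct $V-S=\{v_i,v_{i+1}\}$, verify the property by degree cases, rule out a single-vertex complement for maximality) carries over verbatim. Your opening observation that $\lfloor 2\beta\rfloor=1$ and $\beta<1$ freeze the integer constraints throughout $[\tfrac{1}{2},1)$ is exactly the justification the paper leaves implicit, so you have in fact supplied slightly more detail than the original.
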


\begin{proof}
This follows the same proof as the $\frac{1}{2}\beta$-packing set.
\end{proof}

\begin{proposition} \label{Pnbetasmall}
For $0 < \beta < \frac{1}{2}$, $\beta \text{-pack}(P_n) = 0$.
\end{proposition}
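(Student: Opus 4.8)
The plan is to prove that $\emptyset$ is the only subset of $V$ satisfying the $\beta$-packing property when $0<\beta<\frac12$; since $\emptyset$ is then vacuously a proper, maximal $\beta$-packing set, it follows that $\beta\text{-pack}(P_n)=0$.

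The key step is a ``neighbor-closure'' observation. I would argue that if a set $S\subseteq V$ has the $\beta$-packing property and $u\in S$, then every neighbor of $u$ must also belong to $S$. Indeed, suppose some neighbor $w$ of $u$ lies in $V-S$. Then $u\in N(w)\cap S$, so $|N(w)\cap S|\geq 1$, while on the other hand $\beta|N(w)|=\beta\deg(w)\leq 2\beta<1$ because every vertex of $P_n$ has degree at most $2$. This contradicts the packing inequality $|N(w)\cap S|\leq\beta|N(w)|$, so in fact $w\in S$.

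Next I would invoke connectivity. Because $P_n$ is connected, starting from any vertex of a nonempty $S$ and repeatedly applying the closure observation forces every vertex of $P_n$ into $S$, i.e.\ $S=V$. Thus no nonempty \emph{proper} subset of $V$ can satisfy the $\beta$-packing property. The empty set, on the other hand, satisfies the property vacuously and is proper since $n\geq 2$; it is maximal precisely because no larger proper subset has the property. Hence $\emptyset$ is the unique $\beta$-packing set and $\beta\text{-pack}(P_n)=0$.

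The only point requiring care — more a bookkeeping subtlety than a real obstacle — is that the decisive inequality $\beta\deg(w)\leq 2\beta<1$ must hold uniformly for both interior vertices (degree $2$) and endpoints (degree $1$). In each case a single neighbor in $S$ already overshoots the bound $\beta|N(w)|$, and it is exactly the strict hypothesis $\beta<\frac12$ that makes this so; for $\beta=\frac12$ a degree-$2$ vertex would tolerate one neighbor in $S$, which is why the threshold $\frac12$ separates this case from the preceding propositions.
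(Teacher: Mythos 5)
Your proof is correct and rests on the same core observation as the paper's: since every vertex of $P_n$ has degree at most $2$ and $0<\beta<\frac{1}{2}$, a single neighbor in $S$ already gives $|N(w)\cap S|\geq 1 > \beta|N(w)|$, so no vertex outside $S$ can have a neighbor in $S$. Your explicit neighbor-closure-plus-connectivity step (forcing a nonempty $S$ to equal $V$, which properness forbids) spells out the inference the paper compresses into ``which implies $S=\emptyset$,'' so your write-up is, if anything, slightly more complete than the original.
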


\begin{proof}
For any $v_i \in V$, $\deg(v_i) =$ 1 or 2. This implies
$ \frac{|N(v_i) \cap S|}{\deg(v_i)}$ is either  0, $\frac{1}{2}$ or 1. But $\frac{|N(v_i) \cap S|}{\deg(v_i)} \leq \beta < \frac{1}{2}$, which implies
$S = \emptyset$. So, $\beta \text{-pack}(P_n) = 0$.
\end{proof}

\begin{proposition}
For $\beta = 1$, $\beta \text{-pack}(P_n) = n-1$.
\end{proposition}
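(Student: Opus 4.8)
The plan is to exploit the fact that when $\beta = 1$ the defining inequality becomes vacuous, so that the entire content of the definition collapses to the proper-and-maximal requirement. First I would observe that for any subset $S \subseteq V$ and any vertex $v \in V - S$ we always have $N(v) \cap S \subseteq N(v)$, whence $|N(v) \cap S| \leq |N(v)| = \beta |N(v)|$ when $\beta = 1$. Thus every proper subset $S \subset V$ automatically satisfies the $\beta$-packing property, and no configuration of $P_n$ is ever ruled out by the inequality itself.

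With the inequality removed from consideration, I would analyze the maximality condition directly. Recall that $S$ is maximal precisely when no vertex of $V - S$ can be added to $S$ while keeping $S$ both a proper subset and a $\beta$-packing set. Since the packing property now holds automatically, the only possible obstruction to adding a vertex is properness. If $|V - S| \geq 2$, then adding any single vertex of $V - S$ to $S$ leaves $S$ proper and still $\beta$-packing, so $S$ is not maximal. Conversely, if $|V - S| = 1$, then adding its one remaining vertex produces $S = V$, which is not proper, so $S$ is maximal. Hence the $\beta$-packing sets are exactly the sets $S = V - \{v\}$ for $v \in V$, each of cardinality $n - 1$.

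Finally I would conclude that $\beta\text{-pack}(P_n) = n - 1$: every $\beta$-packing set has cardinality $n - 1$, and such sets exist (for instance $S = V - \{v\}$ for any vertex $v$), so the maximum cardinality is $n - 1$. I do not anticipate a genuine obstacle here; the one point meriting care is to phrase maximality in terms of \emph{both} the packing property and properness, exactly as in the clarifying example, so that the deduction $|V - S| = 1$ is airtight. It is worth remarking that this argument never invokes the path structure of $P_n$, which suggests the identical reasoning yields $\beta\text{-pack}(G) = n - 1$ for every graph $G$ of order $n$ when $\beta = 1$.
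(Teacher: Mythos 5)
Your proof is correct and follows essentially the same route as the paper's: observe that $\beta = 1$ makes the packing inequality vacuous, so properness is the only constraint and any set omitting exactly one vertex is maximal, giving $\beta\text{-pack}(P_n) = n-1$. Your closing remark that the argument is independent of the path structure is also on target --- the paper itself later records exactly this generalization ($\beta\text{-pack}(G) = n-1$ for every connected graph when $\beta = 1$) with the same reasoning.
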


\begin{proof}
Letting $\beta = 1$ means that for any $v_i$, $\frac{|N(v_i) \cap S|}{\deg(v_i)} \leq \beta$. As $S$ must be a proper subset, we have to leave one node out of $S$. Thus,\\
$\beta \text{-pack}(P_n) = n - 1$.
\end{proof}

\begin{corollary}
Given a cycle $C_n$ of size $n \geq 3$, \[\beta\text{-pack}(P_n) = 
    \begin{cases}
        0 & 0 < \beta < \frac{1}{2}\\
        n - 2 & \frac{1}{2} \leq \beta < 1\\
        n - 1 & \beta = 1\\
    \end{cases}
    \] and $V-S$ is connected.
\end{corollary}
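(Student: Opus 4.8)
The plan is to exploit the fact that $C_n$ is $2$-regular, so that every vertex $v$ satisfies $\deg(v)=2$ and hence the ratio $|N(v)\cap S|/\deg(v)$ can only equal $0$, $\tfrac12$, or $1$. This collapses the $\beta$-packing property into a purely combinatorial condition on the edges between $S$ and $V-S$, and the three cases in the statement are exactly the three thresholds at which that condition changes. Throughout I would label the vertices $v_1,\dots,v_n$ cyclically and use that connectivity of $V-S$ follows from Proposition \ref{Prop:connected}; moreover the extremal sets I construct have $V-S$ equal to a single vertex or an adjacent pair, both of which are connected, so the connectivity claim needs no separate argument.

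For $0<\beta<\tfrac12$, the condition $|N(v)\cap S|/2 \le \beta < \tfrac12$ forces $|N(v)\cap S|=0$ for every $v\in V-S$, i.e.\ there are no edges between $S$ and $V-S$. Since $C_n$ is connected this leaves only $S=\emptyset$ or $S=V$; the latter is not proper, so $S=\emptyset$ and $\beta$-pack$(C_n)=0$. I would then confirm that $\emptyset$ is genuinely maximal: adjoining any single vertex $w$ creates two neighbors of $w$ (lying in $V-\{w\}$) with ratio $\tfrac12>\beta$, so no vertex can be added.

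For $\tfrac12\le\beta<1$, the property permits ratio $\tfrac12$ but forbids ratio $1$, so each $v\in V-S$ may have at most one neighbor in $S$. This immediately rules out $|V-S|=1$: a single missing vertex has both neighbors in $S$, giving ratio $1>\beta$; hence $|V-S|\ge 2$ and $|S|\le n-2$. To attain the bound I would take $V-S=\{v_i,v_{i+1}\}$ and verify, exactly as in the degree-$2$ cases of Proposition \ref{paths-1/2}, that each of $v_i,v_{i+1}$ has precisely one $S$-neighbor (ratio $\tfrac12$), and that the set is maximal because adding either of them returns to the forbidden $|V-S|=1$ configuration. For $\beta=1$ the property $|N(v)\cap S|/2\le 1$ holds automatically, so the only constraint is that $S$ be proper; thus $|S|\le n-1$, attained by deleting a single vertex, which gives $\beta$-pack$(C_n)=n-1$.

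The computation is routine precisely because $2$-regularity removes the endpoint bookkeeping that complicated the path case; the one step deserving care is the regime $0<\beta<\tfrac12$, where the conclusion $S=\emptyset$ rests on translating ``no $S$-to-$(V-S)$ edges'' into connectivity of $C_n$ rather than on a local degree check. I would also note that, since a proper set of maximum cardinality satisfying the property is automatically maximal, verifying the property and properness for the exhibited extremal sets suffices to pin down $\beta$-pack$(C_n)$ exactly.
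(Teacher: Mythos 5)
Your proof is correct and takes essentially the same approach as the paper, which simply notes that the cycle case is the path argument of Propositions \ref{paths-1/2}--\ref{Pnbetasmall} restricted to the degree-$2$ cases since $C_n$ is $2$-regular. You merely fill in details the paper leaves implicit, most notably the connectivity argument showing that ``no edges between $S$ and $V-S$'' forces $S=\emptyset$ when $0<\beta<\tfrac{1}{2}$, and the observation that a maximum-cardinality proper set with the property is automatically maximal.
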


\begin{proof}
Note that any path can be made into a cycle by adding an edge. Thus, the proof for a cycle is identical to that of a path except that we need only to consider the cases of degree 2.
\end{proof}

Next we will consider complete bipartite graphs and determine their $\beta$-packing numbers. An example of a $\beta$-packing set is shown in Figure \ref{bipartite} for $K_{4,5}$. 

\begin{proposition}\label{Kmn}
		Let $K_{m,n} = (V_m, V_n, E)$ be a complete bipartite graph. Then for $\beta < 1$, all $\beta$-packing sets $S\cup S'$ have the same size, where $S \subset V_m \subset V,$ $S' \subset V_n\subset V$,  with $|S| = \lfloor \beta \cdot m \rfloor$ and $|S'| = \lfloor \beta \cdot n \rfloor $.  Thus, $\beta\text{-}pack(K_{m,n}) = \lfloor \beta \cdot m \rfloor + \lfloor \beta \cdot n \rfloor.$
\end{proposition}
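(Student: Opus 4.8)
The plan is to exploit the rigid neighborhood structure of $K_{m,n}$: every vertex of $V_m$ has degree $n$ and is adjacent to all of $V_n$, while every vertex of $V_n$ has degree $m$ and is adjacent to all of $V_m$. First I would fix an arbitrary $\beta$-packing set $T$ and write $T = S \cup S'$ with $S = T \cap V_m$ and $S' = T \cap V_n$. For any vertex $v \in V_m \setminus S$ we have $N(v) = V_n$, so $N(v) \cap T = S'$, and the $\beta$-packing property reads $|S'| \le \beta n$; symmetrically, any $v \in V_n \setminus S'$ forces $|S| \le \beta m$. Since $|S|$ and $|S'|$ are integers, these already give $|S| \le \lfloor \beta m \rfloor$ and $|S'| \le \lfloor \beta n \rfloor$.

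Before using these bounds I would dispose of the degenerate configurations $S = V_m$ and $S' = V_n$. If $S = V_m$ then, because $T$ is proper, there is a vertex $v \in V_n \setminus S'$; its constraint becomes $m = |S| \le \beta m$, i.e.\ $\beta \ge 1$, contradicting $\beta < 1$. The case $S' = V_n$ is ruled out symmetrically. Hence every $\beta$-packing set satisfies $S \subsetneq V_m$ and $S' \subsetneq V_n$, so both inequalities above are genuinely in force (the constraining vertices actually exist).

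The heart of the argument is to convert maximality into matching lower bounds. Suppose $|S| < \lfloor \beta m \rfloor$. Then $V_m \setminus S \neq \emptyset$, and I would pick any $w \in V_m \setminus S$ and check that $T \cup \{w\}$ is still a proper set satisfying the $\beta$-packing property. Adding $w \in V_m$ leaves $N(v) \cap (T \cup \{w\}) = S'$ unchanged for $v \in V_m \setminus (S \cup \{w\})$, while for $v \in V_n \setminus S'$ it raises the intersection only to $|S| + 1 \le \lfloor \beta m \rfloor \le \beta m$; moreover $T \cup \{w\}$ is proper since $S' \subsetneq V_n$. This contradicts maximality of $T$, so in fact $|S| = \lfloor \beta m \rfloor$. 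The identical argument on the $V_n$ side yields $|S'| = \lfloor \beta n \rfloor$.

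Combining, every $\beta$-packing set has $|T| = |S| + |S'| = \lfloor \beta m \rfloor + \lfloor \beta n \rfloor$, which proves both that all $\beta$-packing sets share this cardinality and that $\beta\text{-}pack(K_{m,n})$ equals it. The step I expect to require the most care is the maximality argument: one must confirm that inserting a single vertex from one side does not disturb the inequality governing the opposite side and keeps the set proper, and the degenerate cases $S = V_m$ and $S' = V_n$ must be excluded first so that the defining inequalities are truly imposed.
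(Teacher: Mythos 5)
Your proof is correct, but it argues in the opposite direction from the paper, and the two approaches are complementary. The paper fixes sets $S \subset V_m$, $S' \subset V_n$ of sizes $\lfloor \beta m \rfloor$ and $\lfloor \beta n \rfloor$ in advance and verifies that $S \cup S'$ is a $\beta$-packing set: the packing property follows from $\lfloor \beta n \rfloor / n \leq \beta$, and maximality follows from the strict inequality $\beta m < \lfloor \beta m \rfloor + 1$, which shows that adding any vertex to one side pushes the ratio seen from the other side above $\beta$. That argument establishes existence and the lower bound $\beta\text{-pack}(K_{m,n}) \geq \lfloor \beta m \rfloor + \lfloor \beta n \rfloor$, but it only exhibits particular packing sets; the claim that \emph{all} $\beta$-packing sets have these side-sizes is left implicit. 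You instead start from an arbitrary packing set $T = S \cup S'$, rule out the degenerate cases $S = V_m$ and $S' = V_n$ (a point the paper never addresses, and which is genuinely needed so that the constraining vertices exist), derive the forced upper bounds $|S| \leq \lfloor \beta m \rfloor$, $|S'| \leq \lfloor \beta n \rfloor$ by integrality, and then use maximality in the reverse direction --- via $\lfloor \beta m \rfloor \leq \beta m$ rather than $\beta m < \lfloor \beta m \rfloor + 1$ --- to force equality. Your route thus directly proves the uniqueness-of-size assertion in the proposition's statement, which is its real strength; what it does not supply is existence of a $\beta$-packing set, which your computation of $\beta\text{-pack}(K_{m,n})$ tacitly assumes. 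That gap is harmless and closes in one line (the empty set satisfies the packing property since $\beta > 0$, and in a finite graph any proper set with the property extends to a maximal one), but it is worth stating, whereas the paper's constructive proof gets existence for free.
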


\begin{figure} 
    \centering
    \begin{tikzpicture}
    \node [draw,circle, label=below:{\small1/2}] (M0) at (0,0) {};
    \node [draw,circle,fill] (M1) at (1,0) {};
    \node [draw,circle,label=below:{\small1/2}] (M2) at (2,0) {};
    \node [draw,circle,label=below:{\small1/2}] (M3) at (3,0) {};
    \node [draw,circle,fill] (M4) at (4,0) {};
    \node [draw,circle,fill] (N0) at (0.5,2) {};
    \node [draw,circle,fill] (N1) at (1.5,2) {};
    \node [draw,circle,label={\small 2/5}] (N2) at (2.5,2) {};
    \node [draw,circle,label={\small 2/5}] (N3) at (3.5,2) {};
    
    \foreach \x in {0,1,2,3,4}{
    \foreach \y in {0,1,2,3}{
    \draw [line width=1pt] (M\x) edge (N\y);}}
    \end{tikzpicture}
    \caption{A possible $\frac{1}{2}\beta$-packing set of the complete bipartite graph, $K_{4,5}$.}
    \label{bipartite}
\end{figure}
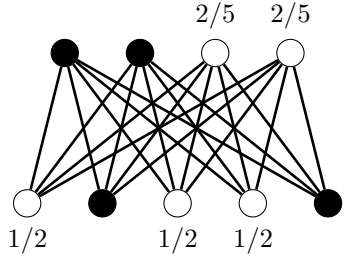

\begin{proof}
Let $S \subset V_m$ be any subset of size $\lfloor \beta \cdot m \rfloor$ and $S'\subset V_n$ be any subset of size $\lfloor \beta \cdot n \rfloor$. 
Since $\beta < 1$, $\lfloor \beta \cdot m \rfloor < m$ and $\lfloor \beta \cdot n \rfloor < n$. Thus, $S \cup S'$ is proper. It suffices to show that the $\beta$-packing property is fulfilled and that $S \cup S'$ is maximal. Let $v \in V_m - S$. Then, $\deg(v) = n$ implies \[|N(v) \cap S'| = |S'| = \lfloor \beta \cdot n \rfloor. \]
Thus,
\[  \frac{|N(v) \cap (S \cup S')|}{|N(v)|} = \frac{|N(v) \cap (S')|}{|N(v)|} = \frac{\lfloor \beta \cdot n \rfloor}{n} \leq \frac{\beta \cdot n}{n} = \beta. \]
Now let $v' \in V_n - S'$. Then, following the same process, we see that
$$\frac{|N(v') \cap (S \cup S')|}{|N(v')|} \leq \beta.$$
Finally, we must show that $S \cup S'$ is maximal. Suppose for contradiction there was a proper subset $S\cup S' \cup U \subset V$ for which the $\beta$-packing property held with $\emptyset \neq U \subset V -(S\cup S')$.  $U$ must contain at least one vertex $u$. WLOG, let $u$ be in the side $u \in V_m \cap U.$ Then for $v \in V_n - (S'\cup U)$, 
\[ \frac{|N(v) \cap (S \cup S'\cup U)|}{|N(v)|} \geq \frac{|N(v) \cap (S \cup \{ u \} )|}{|N(v)|} = \frac{\lfloor \beta \cdot m \rfloor +1 }{m}. \]
Note that $\beta \cdot m < \lfloor \beta \cdot m \rfloor +1$ which implies $\beta < \frac{\lfloor \beta \cdot m \rfloor +1}{m}$. Thus 
\[\frac{|N(v) \cap (S \cup S'\cup U)|}{|N(v)|} > \beta, \] 
so $S\cup S'$ is maximal and $\beta \text{-pack}(K_{m,n}) = \lfloor \beta \cdot m \rfloor + \lfloor \beta \cdot n \rfloor$.

\end{proof}

\begin{proposition} \label{Kmnbeta1}
        For $\beta = 1$, $\beta \text{-pack}(K_{m,n}) = m+n-1$.
\end{proposition}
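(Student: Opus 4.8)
The plan is to exploit the fact that at $\beta = 1$ the $\beta$-packing inequality becomes vacuous, exactly as in the path case treated above. I would begin by observing that for any vertex $v \in V - S$ we always have $N(v) \cap S \subseteq N(v)$, so that
\[ |N(v) \cap S| \leq |N(v)| = \beta |N(v)| \]
holds automatically when $\beta = 1$. Consequently the $\beta$-packing \emph{property} imposes no restriction whatsoever on $S$: every subset of $V$ satisfies it. This reduces the whole problem to understanding the two remaining defining conditions of a $\beta$-packing set, namely that $S$ be a \emph{proper} subset and that it be \emph{maximal} with respect to the (now trivial) property.

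Next I would argue that maximality together with properness pins the cardinality down completely. Since the packing property is satisfied by every proper subset, the only possible obstruction to enlarging $S$ by a single vertex is the requirement that $S$ remain proper, i.e.\ that $S \neq V$. Thus a proper set $S$ fails to be maximal precisely when $|S| < |V| - 1 = m + n - 1$, because in that case $V - S$ contains at least two vertices and we may adjoin one of them while still keeping $S$ a proper subset and still satisfying the property. Conversely, any $S$ with $|S| = m + n - 1$ is maximal, as adding its one missing vertex would yield $S = V$, violating properness.

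It follows that the $\beta$-packing sets of $K_{m,n}$ at $\beta = 1$ are exactly the subsets obtained by deleting a single vertex, each of cardinality $m + n - 1$, and hence $\beta\text{-pack}(K_{m,n}) = m + n - 1$. I expect essentially no genuine obstacle here; the only point requiring a small amount of care is the maximality argument, where one must explicitly rule out both directions (that smaller proper sets are non-maximal, and that a set missing one vertex truly is maximal) rather than simply asserting the count. This is the same mechanism that produced $\beta\text{-pack}(P_n) = n-1$ for $\beta = 1$, and I would flag that parallel to keep the exposition uniform across the paper.
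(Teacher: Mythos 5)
Your proof is correct and follows essentially the same approach as the paper's: both exploit that the inequality $|N(v) \cap S| \leq \beta|N(v)|$ is automatic at $\beta = 1$, so properness and maximality alone force $|S| = m+n-1$. Your version is somewhat more explicit than the paper's (you verify both that smaller proper sets fail maximality and that a set missing one vertex is maximal), but the underlying argument is identical.
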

\begin{proof}
As the $\beta$-packing set must be proper, we let all the nodes be in the $\beta$-packing set and then remove one.
As $|K_{m,n}| = m+n$, $\beta \text{-pack}(K_{m,n}) < |K_{m,n}|$. Adding another node to this set would be all of $K_{m,n}$, so $S \cup S'$ is both proper and maximal.\\
Let $v \notin S \cup S'$. Then, $\frac{|N(v) \cap (S \cup S')|}{|N(v)|} = 1$, since every other node is in $S \cup S'$. Thus, $\beta \text{-pack} (K_{m,n}) = m+n-1$.
\end{proof}

If we try to generalize these results to complete multipartite graphs, Proposition \ref{Kmn} does not generalize in the natural way, but Proposition \ref{Kmnbeta1} does.

\begin{example}
Consider the complete multipartite graph $K_{3,3,3,3}$ and let $\beta = 1/2$. A $\beta$-packing set is given by taking 1 vertex in each of three partitions and 2 vertices out of the forth partition, for a total of 5 vertices in $S$.  One can check this gives 
\[ \frac{1}{2}\beta\text{-pack}(K_{3,3,3,3})=5 > \lfloor \beta \cdot 3 \rfloor + \lfloor \beta \cdot 3 \rfloor + \lfloor \beta \cdot 3 \rfloor +\lfloor \beta \cdot 3 \rfloor =4. \]
\end{example}

\begin{corollary} 
        For $\beta = 1$, $\beta \text{-pack}(K_{n_1,n_2,...,n_m}) = n_1+\cdots + n_m-1$.
\end{corollary}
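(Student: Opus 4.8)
The plan is to mimic the proof of Proposition \ref{Kmnbeta1} almost verbatim, since the essential feature there---that $\beta=1$ imposes the trivial constraint $|N(v)\cap S|/|N(v)|\le 1$, which every vertex automatically satisfies---does not depend on the graph being bipartite. First I would let $V = V_1 \cup V_2 \cup \cdots \cup V_m$ be the partition classes with $|V_i| = n_i$, so that the order is $n_1 + \cdots + n_m$. I would take $S$ to be all but one vertex of $V$, say $S = V \setminus \{w\}$ for an arbitrary $w$, and verify the three defining conditions of a $\beta$-packing set: that $S$ is proper, that the $\beta$-packing property holds, and that $S$ is maximal.

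The properness is immediate because we have removed exactly one vertex, so $S \subsetneq V$. For the $\beta$-packing property, the only vertex in $V - S$ is $w$; since $\beta = 1$, the inequality $|N(w) \cap S|/|N(w)| \le 1$ holds trivially as the left-hand side is a ratio of a subset's cardinality to the full neighborhood's cardinality and hence never exceeds $1$. For maximality, the only vertex available to add to $S$ is $w$ itself, and adjoining it would make $S$ equal to all of $V$, violating the requirement that a $\beta$-packing set be proper; thus no proper superset exists and $S$ is maximal. Together these show $S$ is a valid $\beta$-packing set of cardinality $n_1 + \cdots + n_m - 1$.

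Finally I would argue this is optimal: any $\beta$-packing set must be a proper subset by definition, so its cardinality is at most $(n_1 + \cdots + n_m) - 1$, and since the set just constructed achieves this value, we conclude $\beta\text{-pack}(K_{n_1,\dots,n_m}) = n_1 + \cdots + n_m - 1$. I do not anticipate any genuine obstacle here, as the argument is purely a counting observation once one notices that $\beta = 1$ makes the packing inequality vacuous; the only point requiring a word of care is confirming that $K_{n_1,\dots,n_m}$ actually has a vertex of positive degree (which holds as soon as $m \ge 2$) so that the neighborhood ratio in the packing property is well defined, but even this is not needed for the upper bound, which follows from properness alone.
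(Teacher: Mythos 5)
Your proposal is correct and is essentially the paper's own argument: the paper's proof simply defers to the bipartite case (Proposition on $K_{m,n}$ with $\beta=1$), which you reproduce verbatim---take $S = V \setminus \{w\}$, note the packing inequality is vacuous when $\beta = 1$, and observe that properness alone forces the upper bound $n_1 + \cdots + n_m - 1$. No differences worth noting.
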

\begin{proof}
The proof is similar to the bipartite case. 
\end{proof}

\section{ General Properties of $\beta$-packing sets}
In this section we present several general properties about $\beta$-packing sets and the $\beta$-packing number.   Our first property shows how the $\beta$-packing numbers corresponding to different $\beta$'s are related.  
\begin{proposition}
		Let $0 < \beta_1 \leq \beta_2 \leq 1$. Then $\beta_1$-pack($G$) $\leq \beta_2$-pack($G$). 
	\end{proposition}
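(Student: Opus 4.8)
The plan is to start from a maximum $\beta_1$-packing set and \emph{upgrade} it to a $\beta_2$-packing set of at least the same size, exploiting that the $\beta$-packing \emph{property} (as opposed to maximality) is monotone in $\beta$. Concretely, let $S_1$ be a $\beta_1$-packing set realizing $\beta_1\text{-pack}(G)$, so $|S_1| = \beta_1\text{-pack}(G)$ and for every $v \in V - S_1$ we have $|N(v)\cap S_1| \le \beta_1|N(v)|$. Since $\beta_1 \le \beta_2$, this same set immediately satisfies $|N(v)\cap S_1|\le \beta_1|N(v)| \le \beta_2|N(v)|$, so $S_1$ has the $\beta_2$-packing property. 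The only reason $S_1$ need not already be a $\beta_2$-packing set is that it may fail to be \emph{maximal} for $\beta_2$: loosening the bound can create room to add more vertices.

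To repair this, I would enlarge $S_1$. Consider the family $\mathcal{F}$ of all proper subsets $T \subseteq V$ with $S_1 \subseteq T$ that satisfy the $\beta_2$-packing property. This family is nonempty (it contains $S_1$) and finite, so it has a maximal element $S_2$ under inclusion. By construction $S_2$ is proper, contains $S_1$, and satisfies the $\beta_2$-packing property, and in particular $|S_2|\ge |S_1|$.

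The key step, and the one I expect to be the main obstacle, is checking that $S_2$ is genuinely maximal in the sense of the definition, i.e.\ that no proper superset of $S_2$ \emph{at all} (not merely no proper superset containing $S_1$) satisfies the $\beta_2$-packing property. This follows because any proper superset $T' \supsetneq S_2$ automatically has $T' \supseteq S_2 \supseteq S_1$, so $T'$ would itself lie in $\mathcal{F}$ while strictly containing $S_2$, contradicting the maximality of $S_2$ within $\mathcal{F}$. Hence $S_2$ is a bona fide $\beta_2$-packing set. I would also flag the boundary case $|S_1| = n-1$: then $\mathcal{F} = \{S_1\}$ and $S_2 = S_1$, which causes no trouble.

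Putting the pieces together, $S_2$ witnesses $\beta_2\text{-pack}(G) \ge |S_2| \ge |S_1| = \beta_1\text{-pack}(G)$, which is the claim. The one subtlety to watch throughout is the non-monotonicity of maximality: adding vertices can destroy the packing property for the vertices that remain outside the set, which is exactly why passing through the enlargement family $\mathcal{F}$, rather than trying to extend one vertex at a time and track the property by hand, keeps the argument clean.
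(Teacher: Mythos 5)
Your proof is correct and follows essentially the same route as the paper: observe that a maximum $\beta_1$-packing set automatically satisfies the $\beta_2$-packing property since $\beta_1 \leq \beta_2$, then extend it to a set that is maximal with respect to the $\beta_2$-packing property. Your treatment via the finite family $\mathcal{F}$ simply makes rigorous the paper's informal step of ``adding vertices until $S$ becomes maximal,'' including the check that maximality within $\mathcal{F}$ coincides with maximality in the definition.
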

	\begin{proof}
		Consider $\beta_1$-pack($G$), for any $\beta_1$-packing set $S$, $\forall v \in V-S$, 
		\[  \frac{|N(v) \cap S |}{|N(v)| } \leq \beta_1 \leq \beta_2. \]
		So any such $S$ is contained in a $\beta_2$-packing set and one could add vertices until $S$ becomes maximal w.r.t $\beta_2$. 
	\end{proof}

    It was already seen in Proposition \ref{paths-1/2} for paths that the complement a $\beta$-packing set is connected. This is in fact a general property that holds for all graphs. 
	
	\begin{proposition}\label{Prop:connected}
		For any $\beta$-packing set $S$, $V-S$ is connected. 
	\end{proposition}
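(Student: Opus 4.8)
The plan is to prove the contrapositive through maximality: assuming $V-S$ is disconnected, I will exhibit a proper superset of $S$ that still satisfies the $\beta$-packing property, contradicting the fact that $S$ is maximal. This matches the set-extension notion of maximality already exploited in the proof of Proposition~\ref{Kmn}, where one rules out enlarging the packing set by an arbitrary nonempty set $U$.

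First I would fix notation for the components. Let $H$ denote the subgraph of $G$ induced on $V-S$, and write its connected components as $C_1,\dots,C_k$; disconnectedness means $k\geq 2$. The single structural fact I would lean on is that an induced subgraph has no $G$-edges between distinct components: if $v\in C_j$ and $w\in C_1$ with $j\neq 1$, then $vw\notin E$, and therefore $N(v)\cap C_1=\emptyset$.

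The key step is then to enlarge $S$ by an entire component rather than by a single vertex. Set $S' = S\cup C_1$. Since $C_2\cup\cdots\cup C_k\neq\emptyset$, the set $S'$ is still a proper subset of $V$, and since $C_1\neq\emptyset$ we have $S\subsetneq S'$. To verify the $\beta$-packing property for $S'$, take any $v\in V-S'$; then $v$ lies in some $C_j$ with $j\neq 1$, so by the observation above $N(v)\cap C_1=\emptyset$ and hence $|N(v)\cap S'| = |N(v)\cap S| \leq \beta|N(v)|$. Thus $S'$ satisfies the $\beta$-packing property, is proper, and strictly contains $S$, contradicting the maximality of $S$. Consequently $V-S$ must be connected, and I would note that the argument uses nothing about the value of $\beta$ beyond $0<\beta\leq 1$.

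I expect the main obstacle to be a point that is easy to get wrong rather than a hard computation: it is essential to move a whole component into $S$ and not merely one vertex. Adding a single vertex $u$ can raise $|N(w)\cap S|$ for a neighbor $w$ in the same component and break the inequality there, so a naive one-vertex argument genuinely fails (for instance, with $G=2K_2$, $\beta=\tfrac12$, and $S=\emptyset$, no single vertex can be added, yet $V-S$ is disconnected). Moving a full component sidesteps this entirely, since the only vertices whose counts could change lie in \emph{other} components, and the no-edges-between-components fact leaves those counts untouched.
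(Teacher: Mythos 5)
Your proof is correct and follows essentially the same route as the paper: the paper's argument likewise adds an entire component of $V-S$ to $S$ and notes that the $\beta$-packing property and properness are preserved, contradicting maximality. Your write-up merely makes explicit the details the paper leaves implicit (no edges between components, hence $|N(v)\cap S'|=|N(v)\cap S|$ for $v$ in the remaining components), and your warning that a one-vertex extension can fail is a fair observation but not a departure from the paper's approach.
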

	\begin{proof}
		If $V-S$ is not connected, then $S$ is not maximal since one of the components of $V-S$ could be added to $S$ to form $S'$ and for all other $v\in V-S'$ we still have the property
		\[ \frac{|N(v) \cap S'|}{|N(v)| } \leq \beta \]
		and $S'$ would still be proper. 
	\end{proof}

			\begin{proposition}
		Let $\Delta (G)$ be the max degree of a vertex of a connected graph. If $\beta < \frac{1}{\Delta(G)}$, then $\beta$-pack$(G)=0$. 
	\end{proposition}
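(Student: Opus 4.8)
The plan is to show that the only set satisfying the $\beta$-packing property for such small $\beta$ is the empty set, so that the packing number must be $0$. The key observation is that if $\beta < \frac{1}{\Delta(G)}$, then the fraction $\frac{|N(v)\cap S|}{|N(v)|}$ is forced to equal $0$ for every $v \in V-S$, which will turn out to be impossible unless $S = \emptyset$.

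**First I would** establish the arithmetic constraint. Fix any candidate $\beta$-packing set $S$ and any vertex $v \in V-S$. Since $|N(v)| \leq \Delta(G)$, we have
\[
\frac{1}{|N(v)|} \geq \frac{1}{\Delta(G)} > \beta.
\]
Consequently, if $|N(v)\cap S| \geq 1$, then $\frac{|N(v)\cap S|}{|N(v)|} \geq \frac{1}{|N(v)|} > \beta$, violating the $\beta$-packing property. Hence the property forces $|N(v)\cap S| = 0$ for every $v \in V-S$; that is, \emph{no vertex outside $S$ has any neighbor inside $S$}.

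**Next I would** translate this into a statement about the structure of $G$. The condition ``$N(v)\cap S = \emptyset$ for all $v\in V-S$'' says exactly that there are no edges between $S$ and $V-S$. If both $S$ and $V-S$ were nonempty, this would disconnect $G$ into the two parts $S$ and $V-S$, contradicting connectedness. Since a $\beta$-packing set must be a \emph{proper} subset, $V-S$ is nonempty; therefore $S$ itself must be empty. As the empty set trivially satisfies the $\beta$-packing property and is maximal among sets forced to be empty, we conclude $\beta\text{-pack}(G)=0$.

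**The main obstacle** is essentially bookkeeping rather than a deep difficulty: one must be careful that the strict inequality $\beta < \frac{1}{\Delta(G)}$ (as opposed to $\leq$) is what drives the argument, since equality would permit $|N(v)\cap S|=1$ at a vertex of maximal degree. The only subtlety worth flagging is the use of connectedness, which is explicitly assumed in the hypothesis and is exactly what rules out a nonempty $S$ with no cross-edges to $V-S$; without it, one could take $S$ to be an entire connected component and the conclusion would fail.
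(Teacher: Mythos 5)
Your proof is correct and follows essentially the same route as the paper: the key step in both is that $\beta < \frac{1}{\Delta(G)} \leq \frac{1}{|N(v)|}$ forces $N(v)\cap S = \emptyset$ for every $v \in V-S$, so no edges join $S$ to $V-S$. You in fact spell out the final connectedness argument (that a nonempty proper $S$ with no cross-edges would disconnect $G$) more explicitly than the paper, which simply asserts $S=\emptyset$ at that point.
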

	\begin{proof}
		Suppose $S$ is a nonempty $\beta$-packing set. For any vertex $v \in V-S$, if a neighbor is in a $\beta$-packing set $S$, then 
		\[ \beta < \frac{1}{\Delta(G)} \leq \frac{1}{\deg(v)} \leq \frac{|N(v) \cap S |}{\deg(v) }, \]
		a contradiction.  Thus no vertex has a neighbor in $S$. Therefore $S = \emptyset$. 
	\end{proof}
	
	The next three properties investigate the question of which values for $\beta$ in the interval $0< \beta \leq 1$ are interesting to consider. 
	\begin{proposition}
		If $\beta = 1$, then $\beta$-pack$(G)=n-1$. 
	\end{proposition}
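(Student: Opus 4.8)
The plan is to exploit the fact that when $\beta = 1$, the $\beta$-packing inequality is trivially satisfied, so the only binding constraints in the definition are properness and maximality. First I would observe that for any vertex $v \in V - S$ we have $N(v) \cap S \subseteq N(v)$, hence
\[ \frac{|N(v) \cap S|}{|N(v)|} \leq 1 = \beta \]
automatically, regardless of which set $S$ is chosen. Thus the $\beta$-packing property imposes no restriction whatsoever when $\beta = 1$, and the definition collapses to: $S$ is a proper, maximal subset of $V$.

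Next I would argue the two inequalities that pin down the value $n-1$. Since $S$ must be a proper subset, $|S| \leq n - 1$, giving the upper bound $\beta\text{-pack}(G) \leq n-1$. For the matching lower bound, I would take any set $S$ of cardinality $n-1$, i.e. $V - S = \{v\}$ for a single vertex $v$. Such an $S$ is proper by construction, and it satisfies the (vacuous) $\beta$-packing property as noted above. To confirm maximality, the only way to enlarge $S$ is to adjoin the remaining vertex $v$, but this yields $S \cup \{v\} = V$, which is not a proper subset; hence no vertex can be added while keeping $S$ proper, and $S$ is maximal. Therefore a $\beta$-packing set of size $n-1$ exists, giving $\beta\text{-pack}(G) \geq n-1$.

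Combining the two bounds yields $\beta\text{-pack}(G) = n-1$. There is no genuine obstacle here: the entire content is the single observation that $\beta = 1$ renders the packing inequality vacuous, after which the conclusion follows immediately from the requirement that $S$ be proper and maximal. The only point warranting explicit mention is that maximality is witnessed by the impossibility of adding the lone omitted vertex without violating properness.
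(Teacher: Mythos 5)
Your proof is correct and follows the same approach as the paper's: the inequality $|N(v)\cap S|\leq |N(v)|$ makes the packing condition vacuous when $\beta=1$, so properness forces $|S|\leq n-1$, and omitting any single vertex yields a proper, maximal set of size $n-1$. You spell out the maximality and the two bounds explicitly, which the paper leaves implicit, but the argument is the same.
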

	\begin{proof}
		A $\beta$-packing set must be proper, but we can just leave out any one vertex.  
	\end{proof}

	\begin{proposition}
		Let $G$ be connected. If $\beta < 1$, then $\beta$-pack$(G) < n-1$. 
	\end{proposition}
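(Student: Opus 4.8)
The plan is to rule out a single extremal case directly. Since every $\beta$-packing set is required to be proper, we automatically have $|S| \le n-1$ for any $\beta$-packing set $S$; the entire content of the statement is therefore to show that the value $n-1$ is never actually attained once $\beta < 1$.

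First I would suppose, for contradiction, that some $\beta$-packing set $S$ has $|S| = n-1$, so that $V - S = \{v\}$ for a single vertex $v$. The key observation is that connectivity (together with $n \ge 2$) guarantees $\deg(v) = |N(v)| \ge 1$, and that every neighbor of $v$ must lie in $S$, since $v$ is the only vertex outside $S$. Consequently $N(v) \cap S = N(v)$, and the $\beta$-packing property applied to $v$ would read
\[ \frac{|N(v) \cap S|}{|N(v)|} = \frac{|N(v)|}{|N(v)|} = 1 \le \beta, \]
which is impossible since $\beta < 1$. This contradiction shows that no $\beta$-packing set can have cardinality $n-1$, and hence $\beta\text{-pack}(G) \le n-2 < n-1$.

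I do not expect any genuine difficulty here; the argument is essentially the contrapositive of the $\beta = 1$ computation in the preceding propositions, where leaving out a single vertex forced the ratio to equal exactly $1$. The one point requiring care is the degenerate order $n = 1$, in which $v$ would be isolated and the ratio $|N(v)\cap S|/|N(v)|$ undefined; this is handled by the implicit convention that the connected graphs under consideration satisfy $n \ge 2$, which is in any case necessary for $n-1$ to be a meaningful target cardinality.
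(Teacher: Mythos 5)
Your proposal is correct and matches the paper's proof essentially verbatim: both suppose $V-S=\{v\}$ and derive the contradiction $|N(v)\cap S|/|N(v)| = 1 > \beta$ using connectivity to ensure every neighbor of $v$ lies in $S$. Your extra remark on the degenerate case $n=1$ is a minor point of care the paper leaves implicit, but it does not change the argument.
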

	\begin{proof}
		 Suppose $\{v\} = V- S$. Then 
		\[ \frac{|N(v)\cap S|}{|N(v)|} = \frac{|N(v)|}{|N(v)|} =1 > \beta. \]
	\end{proof}
	    Let us consider the following question a bit more. 
	    \begin{question}
	    Given a graph $G$ how many "interesting" $\beta$'s are there to consider? 	 By interesting we mean that as $\beta$ increases from 0 to 1 it is only at these values where the value of $\beta$-pack$(G)$ could change. 
	    \end{question}
	
		Let $\delta(G) = d_1, ..., d_t = \Delta(G)$ be the distinct degrees of vertices in the graph. Then we claim the possible interesting $\beta$'s are
a subset of the following ratios: 		\[0,\frac{1}{d_1}, \frac{2}{d_1}, ..., \frac{d_1-1}{d_1},1\]
		\[\frac{1}{d_2}, \frac{2}{d_2}, ..., \frac{d_2-1}{d_2}\]
		\[ \vdots \hspace{2.1cm} \vdots \]
		\[\frac{1}{d_t}, \frac{2}{d_t}, ..., \frac{d_t-1}{d_t}.\]

\section{Related Parameters}
    The initial motivation for defining $\beta$-packing sets was from $\alpha$-domination, and it is natural to ask what relationships the two parameters may have with each other. One might ask if $\beta = \alpha$ weather 
    \[\gamma_\alpha(G) \leq \beta\text{-pack}(G)? \quad \text{ or }  \quad  \gamma_\alpha(G) \geq \beta\text{-pack}(G)? \]   The answer is neither one in general. We have by Proposition \ref{Pnbetasmall} that $\beta = \frac{1}{3}$-pack$(P_n) = 0$. But from \cite[Prop. 1]{Dunbar} that $\gamma_{\alpha = \frac{1}{3} }(P_n) = \lceil \frac{n}{3} \rceil$.  So this is an example of $\frac{1}{3}\text{-pack}(G) < \gamma_{\frac{1}{3}}(G)$. 
    
    On the other hand, we have that by Proposition \ref{Kmn} that if when $\beta <1$ $\beta\text{-pack}(K_{m,n}) = \lfloor \beta \cdot m \rfloor + \lfloor \beta \cdot n \rfloor$. In \cite[Prop. 4]{Dunbar} they have the result that for $1\leq m \leq n$
    \[   \gamma_\alpha(K_{m,n} ) = \min \{ \lceil \alpha m \rceil + \lceil \alpha n \rceil, m  \}.\]
	Thus if we let for example $m =1$, $n =10$, $\beta = \alpha = 1/2$ we get than
	\[ \gamma_{\frac{1}{2}}(K_{1,10}) = 1 < \frac{1}{2}\text{-pack}(K_{1,10}) =5. \]

	We think it is an interesting open direction of study to consider if there are different relationship between $\alpha$-domination and $\beta$-packing would be interesting to consider. 
    
    \section{Conclusion}
    In conclusion, we have introduced the new graph parameter, the $\beta$-packing number, and studied some of its properties and given formulas for it for certain classes of graphs. Our motivation for defining $\beta$-packing sets comes for $\alpha$-domination, but we leave it as an open direction to investigate what relationships these two parameters have with each other.  Other interesting open directions would include determining the value of the $\beta$-packing number for other classes of graphs and determining the computational complexity of finding $\beta$-packing sets or the $\beta$-packing number. We hope that this introductory paper and promising future directions will promote further interest in considering $\beta$-packing.

	\nocite{Domke,gera2016graph}
	\bibliographystyle{plain}
	\bibliography{references}
\end{document}